\theoremstyle{plain}
\newtheorem{theorem}{Theorem}[section]
\newtheorem{prop}[theorem]{Proposition}
\newtheorem{lemma}[theorem]{Lemma}
\newcounter{claimCount}
\theoremstyle{definition}
\newtheorem{remark}[theorem]{Remark}
\newtheorem{definition}[theorem]{Definition}
\newtheorem{example}[theorem]{Example}
\newtheorem*{question}{Question}
\newcommand{\nc}{\newcommand}
\nc{\Z}{\mathbb{Z}}
\nc{\Q}{\mathbb{Q}}
\nc{\N}{\mathbb{N}}
\nc{\F}{\mathbb{F}}
\nc{\UU}{\mathbb{U}}
\nc{\C}{\mathbb{C}}
\nc{\M}{\mathcal{M}}
\nc\LL{\mathcal L}
\nc\II{\mathcal I}
\nc{\stt}{\operatorname{St}}
\nc{\stab}{\operatorname{Stab}}
\nc{\GO}[1]{G_{#1}^{00}}
\nc{\Cos}[1]{\operatorname{Cos}(#1)}
\nc{\band}[1]{\bar d_{\mathcal{#1}}}
\nc\BD{\operatorname{BD}}
\nc{\bound}{\operatorname{Bound}}
\nc{\dcl}{\operatorname{dcl}}
\nc{\dclq}{\operatorname{acl^\text{eq}}}
\nc{\acl}{\operatorname{acl}}
\nc{\aclq}{\operatorname{acl^\text{eq}}}
\nc{\nf}[1]{_{\mid {#1}}}
\nc{\restr}[1]{\xspace_{\upharpoonright {#1}}}
\nc{\sbgp}[1]{\langle\xspace {#1}\xspace\rangle}
\nc{\str}[1]{\langle\xspace {#1}\xspace\rangle}
\nc{\strp}[1]{\langle\xspace {#1}\xspace\rangle_\text{pure}}
\nc\CAN{\operatorname{CB}}
\nc\inv{ ^{-1}}
\nc{\tp}{\operatorname{tp}}
\nc\cb{\operatorname{Cb}}
\nc\U{\operatorname{U}}
\nc{\cf}{\text{cf.\,}}
\nc{\eg}{\text{e.g. }}
\def\Ind#1#2{#1\setbox0=\hbox{$#1x$}\kern\wd0\hbox to
	0pt{\hss$#1\mid$\hss} \lower.9\ht0\hbox to
	0pt{\hss$#1\smile$\hss}\kern\wd0}
\def\Notind#1#2{#1\setbox0=\hbox{$#1x$}\kern\wd0\hbox to
	0pt{\mathchardef\nn="0236\hss$#1\nn$\kern1.4\wd0\hss}\hbox to
	0pt{\hss$#1\mid$\hss}\lower.9\ht0 \hbox to
	0pt{\hss$#1\smile$\hss}\kern\wd0}
\def\indip{\mathop{\ \ \hbox to 0pt{\hss$\mid^{\hbox to
				0pt{$\scriptstyle P$\hss}}$\hss}
		\lower4pt\hbox to 0pt{\hss$\smile$\hss}\ \ }}
\def\nindip{\mathop{\ \ \hbox to 0pt{\hss$\!\not{\mid}^{\hbox to
				0pt{$\scriptstyle\, P$\hss}}$\hss}
		\lower4pt\hbox to 0pt{\hss$\smile$\hss}\ \ }}
\begin{document}

\title[Supersimple expansions of the integers]{An exposition on the supersimplicity of certain expansions of the additive group of the integers}

\date{\today}

\author{Amador Martin-Pizarro and Daniel Palac\'in}
\address{Mathematisches Institut,
  Albert-Ludwigs-Universit\"at Freiburg, D-79104 Freiburg, Germany}
\email{pizarro@math.uni-freiburg.de}

\address{Departamento de \'Algebra, Geometr\'ia y Topolog\'ia, 	
	Facultad de Ciencias Matem\'aticas, 
		Universidad Complutense de Madrid, Plaza Ciencias 3, 28040, 
		Madrid, Spain}
	\email{dpalacin@ucm.es}
 
\thanks{The first author conducted reseaarch supported by the project BA6785/1-1 of the German Science Foundation (DFG). The second author conducted
research supported by project STRANO PID2021-122752NB-I00 and Grupos UCM 910444.}
\keywords{Model Theory, Supersimplicity, Dickson's conjecture}
\subjclass{03C45}

\begin{abstract}
 In this short note, we present a self-contained exposition of the supersimplicity of certain expansions of the additive group of the integers, such as adding a generic predicate (due to Chatzidakis and Pillay), a predicate for the square-free integers (due to Bhardwaj and Tran) or a predicate for the prime integers (due to Kaplan and Shelah, assuming Dickson's conjecture). 
\end{abstract}

\maketitle


\section{Introduction}

Kaplan and Shelah \cite{KS17} showed that the theory of the additive group $\Z$ together with a unary predicate for the prime integers is supersimple of rank $1$, assuming Dickson's conjecture. A similar result was shown unconditionally by Bhardwaj and Tran \cite{BT21} for the theory of the additive group $\Z$ together with a unary predicate for the square-free integers. Whilst their proofs do not seem identical at first glance, they share a common structure. First, a partial quantifier-elimination result allows one to reduce the study of the theory to the study of certain formulae. Second, a \emph{randomness} condition on the predicate allows one to show that such formulae never divide unless they define a finite set.  

The goal of this note is to provide a general criteria (of a model-theoretic nature) which allow one to conclude that a certain expansion of (a sufficiently saturated elementary extension of) the additive group of the integers is supersimple of rank $1$. Our approach is also valid for the expansion given by a (symmetrized version of the) generic predicate \cite{CP98}. 

We would like to thank the anonymous referee for the detailed comments which have improved this exposition. 

\section{The setting}\label{S:Setting}

Throughout this note, we will denote by $\LL$ the language of (abelian) groups with a constant symbol for the element $1$ of $\Z$, as well as unary predicates $\equiv_k 1+\stackrel{j}{\ldots}+1$, for $0\le j< k$, which are interpreted as the elements in the group congruent to $j$ modulo $k$. The $\LL$-theory of the structure $\mathcal Z$ with universe $\Z$ is complete and has quantifier elimination \cite{wB76}. Moreover, working inside a sufficiently saturated expansion $\UU$ of $\mathcal Z$, the structure $\str{A}_{\LL}$ generated by a subset $A$ of $\UU$ is the subgroup $\sbgp{A, 1}$ generated by $A\cup\{ 1\}$ and the $\LL$-algebraic closure of $A$ is the pure closure $\strp A$ of elements $x$ of $\UU$ such that $mx=x+\stackrel{m}{\ldots}+x$ belongs to $\str{A,1}$. Clearly $\str{A}_{\LL}\subset \strp A$. In an abuse of notation, if $x\equiv_k j$, we set $\frac{x-j}{k}$ as the unique element $y$ with $ky+j=x$. 

Given now a new unary predicate $P$ (not in $\LL$), we consider two expansions of the language $\LL$, one given by  $\LL_P=\LL\cup\{P\}$ and another given by $\LL_P^+=\LL\cup\{P_n\}_{1\le n\in \N}$, where we interpret the predicate $P_n$ in $\Z$ as follows: \[ P_n(x) \text{ holds } \  \iff \ x\equiv_n 0 \land  
P(x/n). 
\]
In particular, in every elementary extension $\mathcal M$ of the $\LL_P^+$-structure with universe $\Z$, we have that  $P_n(\mathcal M)=nP(\mathcal M)$ for $1\le n$. Every predicate $P_n$ is definable in $\LL_P$ yet $P_n$ is not quantifier-free definable if $n\ge 2$. We will impose that $P$ (and thus every $P_n$) is \emph{symmetric}, that is,  
\[ \text{An element $a$ satisfies $P$ if and only if $-a$ does. }   \tag{P0}  \]

\begin{remark}\label{R:trivial_obs}
Given positive integers $m$, $n$ and $k$ as well as elements $a$ and $b$ of $\UU$, we have that 
\[ 
P_n( m a+ b) \text{ holds } \ \Longleftrightarrow \ P_{nk}( kma+ kb)  \text{ holds}. 
\] 
Therefore, whenever we consider the quantifier-free $\LL_P^+$-type of elements $m_1a+b_1, \ldots, m_r a+b_r$, we may always assume that $m_1=\ldots=m_r=m\ge 1$ (taking the least common multiple of the $m_i$'s). 
\end{remark}

A fundamental component in the proof of Kaplan and Shelah \cite{KS17} of the supersimplicity of the theory of $\Z$ with a distinguished unary predicate for the prime integers was Dickson's conjecture. This conjecture, which is still open and unlikely to be answered in a foreseable future, can be seen as a weakening of a condition already studied by Chatzidakis and Pillay for the theory of a geometric structure with a \emph{generic} predicate \cite{CP98}. 

\begin{definition}\label{D:add_gen}\textup{(}\cf \cite[Theorem 2.4]{CP98}\textup{)}
Consider an elementary extension $\mathcal M$ of $\mathcal Z$.  A subset $D$ of $M$ is \emph{additively random} if for every $r$ and $r'$ in $\N$ (where $r$ or $r'$ are possibly $0$) and terms $m_1 x+z_1, \ldots, m_rx+z_r$ as well as  $m'_1 x+z'_1, \ldots, m'_{r'}x+z'_{r'}$ with all $m_i,m'_j\ge 1$, there exists an $\LL$-formula $\chi(\bar z, \bar z')$ and a finite subset $\bound(\chi)$ of $\Z$ (\emph{the boundary} of $\chi$) such that the following conditions hold whenever $(m_i, b_i) \ne  (m'_j, b'_j)$ with $1\le i\le r$ and $1\le j\le r'$: 
\begin{enumerate}[(P1)]
    \item If $\chi(\bar b, \bar b')$ holds in $\mathcal M$, then there are infinitely many elements $a$'s in $M$ with 
    \[ 
    \bigwedge_{i=1}^r m_i a+b_i \in D \land  \bigwedge_{j=1}^{r'} m'_j a+b'_j \notin D.
    \] 
    \item If $\chi(\bar b, \bar b')$ does not hold in $\mathcal M$, then $r\ge 1$ and there exists some $k$ in the boundary $\bound(\chi)$ such that for every integers $s$ in $\{0,\ldots, k-1\}$, one of the terms $m_is+b_i$ is congruent with $0$ modulo $k$, that is,
    \[
    \mathcal M\models \forall \bar y \bar y' \left( \big(\neg\chi(\bar y,\bar y')\wedge \bigwedge_{i,j} \neg(y_i = y_j') \bigg) \to \bigvee_{k\in \bound(\chi)} \ \bigwedge_{s<k} \ \bigvee_{i=1}^{r} \ m_is+y_i\equiv_k 0 \right). 
    \]
    \item For every $k$ in $\bound(\chi)$, the solution set of $(x\in D) \land x\equiv_k 0$ in $\mathcal M$ is a finite subset of $\Z$. 
\end{enumerate}
\end{definition}
Observe that being additively random is an elementary property of the theory of $(\mathcal M, D)$ in the language $\LL_P$. 
\begin{remark}\label{R:add_gen}
With the notation of Definition \ref{D:add_gen}, whenever $(m_i, b_i) \ne  (m'_j, b'_j)$ with $1\le i\le r$ and $1\le j\le r'$, we have that (P1)-(P3) imply that $\chi(\bar b, \bar b')$ holds in $\mathcal M$ if and only if there are infinitely many elements $a$ in $M$ with \[ 
    \bigwedge_{i=1}^r m_i a+b_i \in D \land  \bigwedge_{j=1}^{r'} m'_j a+b'_j \notin D.
    \] 
Indeed, we need only show one implication. Assume therefore that there are infinitely many such elements $(a_n)_{n\in\N}$ in $M$ as above, yet $\chi(\bar b,\bar b')$ does not hold. In particular, we have that $r\ge 1$  by property (P2) and thus $m_ia_n+b_i$ belongs to $D$ for every $1\le i\le r$ and every $n$ in $\N$. Furthermore, property (P2) also yields the existence of some $k$ in $\bound(\chi)$ such that for infinitely many $n$'s we have $m_{i_0}a_n+b_{i_0}\equiv_k 0$ for some $1\le i_0\le r$, which clearly contradicts property (P3), since $m_{i_0}\ne 0$.  
\end{remark}
\begin{example}\label{E:add-random_ex}~

\begin{enumerate}[(a)]
\item The set of prime integers is additively random \cite{KS17} in $\mathcal Z$, under the assumption that Dickson's conjecture holds. Indeed, the associated formula $\chi(\bar z, \bar z')$ is given by their property $\star_{\bar f}$ in \cite[Lemma 2.3 \& Remark 2.5]{KS17} with $f_i=m_ix+z_i$ for $1\le i\le r$ and the boundary $\bound(\chi)$ consists of all the prime integers strictly less than $N=\max(m_1,\ldots,m_r, r)$+1 \cite[Remark 2.1]{KS17}. More precisely, 
\[
\chi(\bar z, \bar z')=\bigwedge_{p<N \text{ prime}} \ \bigvee_{s<p} \ \bigwedge_{i=1}^{r} \ m_is+z_i\not \equiv_p 0.
\] 
The condition (P3) holds, since an element $x$ which is divisible by $p$ and belongs to the prime integers is either $p$ or $-p$. 

\item The set of square-free integers is additively random \cite{BT21} in $\mathcal Z$. Indeed, the associated formula $\chi(\bar z, \bar z')$ is given  by 
\[ 
\bigwedge_{p \le  B \text{ prime}} \exists x \left(  \bigwedge_{i=1}^r m_ix+z_i\not\equiv_{p^2} 0 \right),
\] 
obtained by existential quantification from Remark \ref{R:trivial_obs} and the \emph{associated $p$-conditions} \cite[p. 1330, Lemmata 2.7 \& 2.11 \& Theorem 2.14]{BT21} where the boundary consists of all the integers $k=p^2$ with $p \le B= \max(\prod_i m_i, r)+1$ prime \cite[p. 1331]{BT21}.  Condition (P3) holds trivially, since a square-free integer cannot be congruent with $0$ modulo $p^2$. 
 \item The set $D=P^\mathcal{M}\cap -P^\mathcal{M}$ is additively random in every model $(\mathcal M,P^\mathcal M)$  of the theory $\mathrm{Th}(\mathcal Z)_P$ in the sense of Chatzidakis and Pillay \cite{CP98}. Indeed, the $\LL$-theory  $\mathrm{Th}(\mathcal Z)$ eliminates $\exists^\infty$ by quantifier-elimination \cite{wB76}, so the theory $\mathrm{Th}(\mathcal Z)_P$ of the generic predicate exists \cite[Theorem 2.4]{CP98}. The explicit axiomatization provided in their theorem yields immediately that the formula $\chi(\bar z,\bar z')=\exists x (x=x)$ with boundary the empty set works. 
\end{enumerate}
\end{example}

\begin{definition}\label{D:prim_qf}
A \emph{basic} formula in $n$ variables in the language $\LL_P$ is a formula  of the form 
\[ 
\theta(x_1,\ldots,x_n) \land \bigwedge\limits_{i\in I} P^{\epsilon_i}(t_i(x_1,\ldots,x_n)),
\] 
for a finite number of terms $t_i(\bar x)$, the formula $\theta(\bar x)$ is a  finite conjunction of atomic $\LL$-formulae (that is, a finite system of congruences) and each $\epsilon_i$ belongs to $\{-1,1\}$, with the convention that $P^1=P$ and $P^{-1}=\neg P$. A basic $\LL_P$-formula without any built-in congruences (that is, with no $\theta(\bar x)$ as above, given only by a conjunction of terms in the predicate and negations of such) is called \emph{primary}. 

Analogously, we define \emph{basic} formulae in the language $\LL_P^+$ as formulae of the form 
\[ 
\theta(\bar x) \land  \bigwedge\limits_{\mathclap{\substack{ i\in I \\ k\in J}}}  P_k^{\epsilon_{i, k}}(t_{i,k}(\bar x)),
\] 
for some finite subsets $I$ and $J$ of $\N$.  By an abuse of notation,  we say that a basic $\LL_P$-formula (resp. a basic $\LL_P^+$-formula) is over the $\LL_P^+$-substructure $A$ if it is an instance of a basic $\LL_P$-formula (resp. a basic $\LL_P^+$-formula) with parameters in $A$.
\end{definition}

\begin{lemma}\textup{(}\cf \cite[Claim 2.9]{KS17}\textup{)}~\label{L:Ziegler_KS}
Assume that the substructure $A=\strp{A}$ is pure in $\UU$ and consider a basic $\LL_P^+$-formula $\varphi(x, \bar a)$ in $\tp(b/A)$, where $b$ is a singleton. Then there are a primary $\LL_P$-formula $\psi(x, \bar a_1)$ over $A$, a non-constant $\LL$-term $t(x)=t(x, \bar a_1)$ over $A$ and an element $e$ with $t(e)=b$ such that
\begin{itemize}
\item the formula $\psi(x, \bar a_1)$ belongs to $\tp(e/A)$; 
\item For every $\LL_P^+$-isomorphism $F:A\to A'$ between the substructures $A$ and $A'$ of $\UU$, we have that $\psi(\UU, F(\bar a_1))\subset \varphi(t(\UU,F(\bar a_1)),F(\bar a))$. In particular, the subset $\psi(\UU, \bar a_1)$ is contained in $\varphi(t(\UU,\bar a_1), \bar a)$. 
\end{itemize}
\end{lemma}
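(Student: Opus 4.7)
The strategy is to absorb the congruence information hidden in $\varphi$ into a single affine change of variable $x\mapsto t(x)$, leaving only $P$ and $\neg P$ applied to $\LL$-terms. Since every $\LL$-term in one variable over $A$ has the form $mx+c$ with $c\in\str{A,1}$, we may write
\[
\varphi(x,\bar a) = \theta(x,\bar a) \land \bigwedge_{(i,k)\in I} P_k^{\epsilon_{i,k}}(m_{i,k}x + c_{i,k}),
\]
where $\theta$ is a finite conjunction of congruences $m'x + c' \equiv_{k'} j'$. Let $M$ be a positive common multiple of every modulus $k'$ or $k$ appearing in $\varphi$. As $\LL$ contains the constant $1$, each residue mod $M$ is named by an integer $c\in\{0,\ldots,M-1\}\subset\str{1}\subset A$ with $b\equiv_M c$, and purity of $A$ supplies a unique $e\in\UU$ with $Me+c=b$; set $t(x):=Mx+c$.

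Substituting $x=My+c$ throughout $\varphi$ shows two things. First, each congruence $m'x+c'\equiv_{k'}j'$ becomes $m'My+(m'c+c')\equiv_{k'}j'$, which reduces (as $k'\mid M$) to $m'c+c'\equiv_{k'}j'$; this holds since $b\equiv_M c$ and $\theta(b,\bar a)$ is true, so $\theta(t(y),\bar a)$ holds for every $y\in\UU$. Second, each $P_k(m_{i,k}(My+c)+c_{i,k})$ unfolds to $m_{i,k}My+d_{i,k}\equiv_k 0 \ \land\ P((m_{i,k}M/k)y+d_{i,k}/k)$, where $d_{i,k}:=m_{i,k}c+c_{i,k}$; the congruence collapses to $d_{i,k}\equiv_k 0$, a condition on parameters alone. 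Let $I_0:=\{(i,k):d_{i,k}\equiv_k 0\}$. Since $\varphi(b,\bar a)$ forces $P_k(m_{i,k}b+c_{i,k})$ whenever $\epsilon_{i,k}=1$, and $b\equiv_k c$, every positive conjunct lies in $I_0$; for $(i,k)$ with $\epsilon_{i,k}=-1$ and $(i,k)\notin I_0$, the conjunct $\neg P_k(\cdot)$ is automatic for all $y$ and may be discarded. For $(i,k)\in I_0$ the quotient $d_{i,k}/k$ lies in $A$ by purity.

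Letting $\bar a_1$ collect $\bar a$, $c$ and the quotients $d_{i,k}/k$, define
\[
\psi(x,\bar a_1) := \bigwedge_{(i,k)\in I_0} P^{\epsilon_{i,k}}\bigl((m_{i,k}M/k)x + d_{i,k}/k\bigr).
\]
Then $\psi(e,\bar a_1)$ holds by construction. For any $\LL_P^+$-isomorphism $F\colon A\to A'$, since $F$ fixes $1$ and preserves all congruences, the identical calculation carried out over $A'$ gives $\psi(y,F(\bar a_1))\to\varphi(t(y),F(\bar a))$ for every $y\in\UU$. The principal subtlety is the bookkeeping for negated $P_k$-conjuncts, which split according to whether the modular congruence already forces $\neg P_k$, together with the essential use of purity to ensure each $d_{i,k}/k$ is indeed a parameter from $A$.
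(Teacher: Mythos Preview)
Your proof is correct and follows the same underlying idea as the paper's: absorb all congruence constraints into a single affine change of variable $t(x)=Mx+c$, then rewrite each $P_k$ as $P$ on the quotient. Your execution is in fact more direct than the paper's, which first normalises all $x$-coefficients to a common $m$ (via Remark~\ref{R:trivial_obs}), then handles $\theta$ separately through the Chinese Remainder Theorem and a B\'ezout argument to produce a first map $t_1$, and only afterwards takes the lcm $N$ of the $P_k$-indices to produce a second map $s$, setting $t=t_1\circ s$; you collapse all of this into a single lcm $M$ and one substitution. One small wording issue: the existence of $e$ with $Me+c=b$ comes simply from $b\equiv_M c$ in $\UU$ (divisibility plus torsion-freeness), not from purity of $A$; purity is genuinely needed only where you invoke it later, to place the quotients $d_{i,k}/k$ inside $A$.
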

\begin{proof}
By Remark \ref{R:trivial_obs}, we may assume that there is a positive integer $m\ge 1$ such that every term $t_{i,k}(x,\bar a)$ occurring in $\varphi$ in which $x$ occurs non-trivially is of the form $mx+a_{i,k}$. Given a term $mx+a$ with $a$ in $A$, observe that $\neg P_k(mx+a)$ is equivalent to 
\[ 
\bigvee_{j=1}^{k-1} mx+a\equiv_k j \vee  \exists y (ky = mx+a  \land \neg  P(y) \big). 
\] 
We will modify the formula $\varphi(x, \bar a)$ to obtain a new basic $\LL_P^+$-formula $\varphi_0(x,\bar a)$ as follows: if $\neg P_k(m x + a_{i,k})$ occurs in $\varphi(x,\bar a)$ and $m b+a_{i,k}\not \equiv_k 0$, then remove the formula $\neg P_k(m x + a_{i,k})$ and add $m x+a_{i,k} \equiv_k j$ to $\theta$ for a suitable $j=1+\stackrel{j}{\ldots}+1$ in $A$. Otherwise, leave the terms and subformulae unmodified. In particular, the modified formula $\theta$ so obtained is still a finite conjunction of atomic $\LL$-formulae. Note that $\varphi_0(x,\bar a)$ belongs to $\tp(b/A)$ and implies $\varphi(x,\bar a)$.  Thus, by the previous discussion,  setting $\varphi=\varphi_0$,  we may assume that $\theta(x, \bar a)$ encodes that every term $m x + a_{i,k}$ is divisible by $k$, for every occurrence (positive or negative) of the form $P_k(m x + a_{i,k})$ in $\varphi(x,\bar a)$. 

By the chinese remainder theorem, we may assume that $\theta(x, \bar a)$ is a single congruence $\ell x + a\equiv_n 0$ with $1\le \ell$ in $\N$, so $\ell b+ a= n \xi$ for some $\xi$ in $\UU$. Set $d$ the greatest common divisor of $\ell$ and $n$, so the element $a$ is divisible by $d$. Since $A$ is pure, there exists some $a'$ in $A$ with $d a'=a$, so setting $\ell'=\ell/d$ and $n'=n/d$ we get $\ell' b + a'= n' \xi$. By Bezout's identity, there exists some integer $k$ with $k \ell'\equiv_{n'} 1$. Rewriting, we conclude that $b+a''=n' b_1$ for some $b_1$  in $\UU$ and $a''=k a'$ in $A$, so the term $t_1(x, a'')= n' x-a''$ satisfies that $t_1(b_1, a'')=b$.  By construction, we have that $\ell t_1(x, a'') + a\equiv_n 0$, so $t_1(\UU, a'')$ lies in $\theta(\UU, \bar a)$. Set now 
\begin{multline*}
    \varphi_1= \bigwedge\limits_{\mathclap{\substack{ i\in I\\ k\in K}}}  P_k^{\epsilon_{i,k}}(mt_1(x) +a_{i, k})= \bigwedge\limits_{\mathclap{\substack{ i\in I \\ k\in K}}}  P_k^{\epsilon_{i,k}}(mn'x +(a_{i,k}-ma''))  \\ =  \bigwedge\limits_{\mathclap{\substack{ i\in I \\ k\in K}}}  P_k^{\epsilon_{i,k}}(mn'x +a'_{i,k})  \in \tp(b_1/A),
\end{multline*}
with $a'_{i,k}=a_{i,k}-ma''$ (so $\varphi(x, \bar a)=\varphi_0(x,\bar a)=\theta(x) \land \varphi_1(x)$). If no $P_k$ with $k\ge 2$ occurs in $\varphi_1$ (positively or negatively), then $\varphi_1$ is already a primary $\LL_P$-formula. Set thus $e=b_1$, $t=t_1$ and $\psi=\varphi_1$, so the conclusion of the theorem clearly holds. 

Otherwise, let $N\ge 2$ be the least common multiple of all the indices $k$'s such that $P_k$ (or its negation) occurs in $\varphi_1$. The element $b_1$ is congruent to some natural number $m_0=1+\stackrel{m_0}{\ldots}+1$ in $A$ modulo $N$, so set 
\[ 
e=\frac{b_1-m_0}{N},\ \text{ and hence } \ s(e) = b_1 \ \text{ for } s(x)=Nx+m_0.
\]  The term $t(x, a'')=t_1(s(x), a'')$ satisfies that $t(e, a'')=b$ and $t(\UU, a'')$ lies in $\theta(\UU, \bar a)$ by construction. In order to prove the statement, it suffices therefore to find a primary $\LL_P$-formula $\psi(x, \bar a_1)$ in $\tp(e/A)$ such that $\psi(\UU, F(\bar a_1))$ is contained in $\varphi_1(t(\UU, F(a'')), F(\bar a))$ for every $\LL_P^+$-isomorphism $F:A\to A'$, since the conjunction $\theta(x, \bar a)\land \varphi_1(x, \bar a)$ implies our original formula $\varphi(x, \bar a)$. 

Consider a term $m n' x+ a'_{i,k}$ occurring in the formula $\varphi_1$. Note that the element 
\[ 
mb+a_{i,k} = m n' b_1+ a'_{i,k}= m n' N e + a'_{i,k}+n m' m_0 
\] 
is divisible by $k$ by the first paragraph of the proof.  Since $k$ divides $N$, we have that  $a'_{i,k} + m n' m_0$ is divisible by $k$ (in $\UU$ and hence in $A$, as $A$ is pure). Set now $d_{i,k}=\frac{a'_{i,k}+m n' m_0}{k}$. In particular, 
\[
m b+ a_{i,k}=k(m n'\frac{N}{k} e+ d_{i, k}) \text{ satisfies $P_k$} \ \Leftrightarrow \ m n'\frac{N}{k} e+ d_{i, k} \text{ satisfies $P$}.
\] 
Extend the tuple $\bar a$ to $\bar a_1$ containing all $d_{i, k}$'s and consider the primary $\LL_P$-formula 
\[ 
\psi(x, \bar a_1)= \bigwedge  \limits_{\mathclap{\substack{ i\in I \\ k\in K}}}  P^{\epsilon_{i, k}}\Big(m n' \frac{N}{k} x + d_{i, k}\Big) \in \tp(e/A).
\]  
Assume now we have an $\LL_P^+$-isomorphism $F:A\to A'$ and choose some element $e'$ realizing $\psi(x, F(\bar a_1))$. By construction, we conclude that 
\begin{align*}
    m n'\frac{N}{k} e'+ F(d_{i, k}) \text{ satisfies $P$}\ &\Leftrightarrow \  k \Big(m n'\frac{N}{k} e'+ F(d_{i, k})\Big) \text{ satisfies $P_k$}  \\
   &\Leftrightarrow \ m  t(e',F(a'')) +F(a_{i,k}) \text{ satisfies $P_k$}, 
\end{align*}
as desired. 
\end{proof}
We finish this section with an easy observation on the definable set given by a primary formula, which will be useful in Section \ref{S:QE} to show the supersimplicity of the theory of the pair. 
\begin{definition}\label{D:good_pos}
An instance of a primary $\LL_P$-formula of the form
\[ 
\bigwedge_{i=1}^r P(m_i x+ c_i) \land  \bigwedge_{j=1}^{r'} \neg P(m'_j x+ c'_j),
\]
is \emph{in good position} if $(m_i, c_i)\ne (m'_j, c'_j)$ for $i\ne j$.     
\end{definition} 
Clearly, every consistent primary $\LL_P$-formula is in good position.

\begin{lemma}\label{L:primary_infty}
Let $\M$ be an elementary extension of the $\LL$-structure $\mathcal Z$. Consider an expansion of $\mathcal M$ by a new predicate $P$ which is interpreted as an additively random subset of $M$. Given $n\ge 1$ and primary $\LL_P$-formulae $\psi_1,\ldots, \psi_n$, each with parameters in a subset $A_i$, which are \emph{compatible}, that is, no term occurring positively in some $\psi_i$ occurs negatively in some $\psi_j$ with $i\ne j$, either $\bigwedge_i  \psi_i$ has infinitely many realizations or all the realizations of some $\psi_i$ are contained in the pure closure $\strp{A_i}$ of $A_i$.
\end{lemma}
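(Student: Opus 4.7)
The strategy is to apply the additive randomness of $P$ to the combined primary $\LL_P$-formula $\Psi := \bigwedge_{i=1}^n \psi_i$, regarded as a primary $\LL_P$-formula over the parameter set $A := \bigcup_i A_i$. The compatibility hypothesis guarantees that no term is simultaneously asserted positively in some $\psi_i$ and negatively in some $\psi_j$, so $\Psi$ is in good position in the sense of Definition \ref{D:good_pos}. By Remark \ref{R:trivial_obs}, I may assume that every term occurring in $\Psi$ shares a common coefficient $m \ge 1$; list the positive $P$-terms as $m x + c_1, \ldots, m x + c_r$ and the negative ones as $m x + c'_1, \ldots, m x + c'_{r'}$, and let $\chi(\bar z, \bar z')$ together with the finite boundary $\bound(\chi) \subset \N$ be the data supplied by Definition \ref{D:add_gen}.

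If the instance $\chi(\bar c, \bar c')$ holds in $\mathcal M$, then condition (P1) immediately yields infinitely many realizations of $\Psi$, so the first alternative of the dichotomy is achieved. Otherwise, (P2) forces $r \ge 1$ and the existence of some $k \in \bound(\chi)$ such that for every residue class $s \in \{0, \ldots, k-1\}$ at least one positive term $m x + c_{i(s)}$ satisfies $m s + c_{i(s)} \equiv_k 0$. For any realization $b$ of $\Psi$, letting $s := b \bmod k$, the value $m b + c_{i(s)}$ belongs to $D$ and is divisible by $k$, so (P3) pins it to one of only finitely many standard integers. Since the positive term $m x + c_{i(s)}$ was contributed by some $\psi_{j(s)}$, the constant $c_{i(s)}$ lies in $A_{j(s)}$, and therefore $m b \in \str{A_{j(s)}, 1}$, placing $b$ in the pure closure $\strp{A_{j(s)}}$.

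The subtle final step, and where I expect the main technical work to lie, is collapsing this residue-indexed family of pure closures into a single index $i$ as the statement demands. I plan to exploit the compatibility hypothesis to choose the covering indices $\{i(s)\}_{s<k}$ so that they all originate from a single $\psi_i$: concretely, if some $\psi_i$ already contains enough positive terms to provide the coverage required by (P2) on its own, then all of its realizations are forced into $\strp{A_i}$. In the contrary case, running the single-formula version of (P1)/(P2) on each $\psi_i$ separately would have to produce infinitely many realizations of every $\psi_i$; compatibility should then coordinate these witnesses to yield a witness of $\chi$ for the full $\Psi$, contradicting the standing assumption that $\chi(\bar c,\bar c')$ fails and hence reducing to the first alternative. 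Making this coordinating argument precise, and identifying exactly how compatibility rules out the possibility of realizations spread across disjoint pure closures $\strp{A_{j(s)}}$, is the heart of the proof.
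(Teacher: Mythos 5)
Your overall route is the same as the paper's: form $\Psi=\bigwedge_i\psi_i$, note it is in good position, and run (P1)--(P3) for the associated formula $\chi$. Two preliminary remarks. First, the appeal to Remark \ref{R:trivial_obs} to arrange a common coefficient $m$ is not legitimate here: rescaling $P(m_ix+c_i)$ replaces $P$ by some $P_k$, so the rescaled formula is no longer a primary $\LL_P$-formula and Definition \ref{D:add_gen} no longer applies to it (and rescaling can even create spurious positive/negative clashes). Since the definition already allows arbitrary coefficients $m_i\ge 1$, this normalization should simply be dropped. Second, compatibility only forbids clashes between distinct $\psi_i$'s; a clash inside a single $\psi_i$ makes that $\psi_i$ inconsistent, in which case the second alternative holds vacuously --- this degenerate case has to be disposed of before asserting good position, as the paper does.

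The genuine gap is exactly the step you flag at the end, and the repair you sketch cannot work. Condition (P2) provides, for each residue $s<k$, a positive term coming from some $\psi_{j(s)}$, and your argument pins down a realization $b$ only when $b$ satisfies the positive conjunct covering the residue class of $b$; when the indices $j(s)$ genuinely vary with $s$ there is no single index to collapse to. Your fallback --- that if no single $\psi_i$ covers all residues, then each $\psi_i$ alone has infinitely many realizations and compatibility turns these into a witness for $\chi(\bar c,\bar c')$ --- is false: in a sufficiently saturated elementary extension of $(\mathcal Z,\mathrm{SF})$ (additive randomness is an elementary property), take $\psi_i(x)=P(x+i)$ for $i=0,1,2,3$. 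Each $\psi_i$ has infinitely many, indeed nonstandard, realizations, yet $\chi$ fails via $k=4$ (every residue modulo $4$ is covered, but by a different $\psi_i$), the conjunction has no realizations at all, and no $\psi_i$ has all its realizations inside $\strp{A_i}=\Z$. So the single-index conclusion cannot be recovered in the stated generality; be aware that the paper's own proof is cavalier at precisely this point, since it chooses a realization $e$ of $\psi_i$ and then the residue $s$ of $e$, tacitly assuming the covering term for that $s$ comes from the same $\psi_i$. What your argument does establish --- the case $n=1$, and more generally the case where the positive terms of one single $\psi_i$ already cover all residues modulo the relevant $k\in\bound(\chi)$ --- is all that is actually used later: Proposition \ref{P:BF_random} only needs $n=1$, and in Theorem \ref{T:SU1} the $\psi_i$ are instances of one formula along a $B$-indiscernible sequence, so corresponding parameters lie in the same congruence class modulo every standard integer and a single instance covers exactly the same residues as the whole conjunction. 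Making that last observation explicit is the correct way to finish along your lines.
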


\begin{proof}
Note that the conjunction $\psi=\bigwedge_i \psi_i$ is again a primary $\LL_P$-formula. Since the formulae $\psi_1,\ldots,\psi_n$ are compatible, the formula $\psi$ is in good position if and only if each of the formulae $\psi_i$ is. 

Now, if  $\psi$ is not in good position, it is inconsistent and has no realizations. Assume therefore, that $\psi$ is in good position and write it as
 \[ 
\bigwedge_{i=1}^r P(m_i x+ c_i) \land  \bigwedge_{j=1}^{r'} \neg P(m'_j x+ c'_j).
\]  
The pairs $(m_i, c_i)$ and $(m'_j, c'_j)$ are pairwise distinct. By Definition \ref{D:add_gen}, if $\chi(\bar z, \bar z')$ is the associated formula to $\psi$, we have that either $\psi$ has infinitely many realizations or  $\chi(\bar c,\bar c')$ does not hold. In the latter case, we deduce from Definition \ref{D:add_gen} (P2) that there is some $k$ in the boundary $\bound(\chi)$  such that for every integer $s$ in $\{0,\ldots, k-1\}$, some element $m_{i_0} s+c_{i_0}$ with $1\le i_0\le r$  must be divisible by $k$. Note that the term $m_{i_0} x+c_{i_0}$ belongs to some formula $\psi_i$, so $c_{i_0}$ lies in $A_i$. Choose any realization $e$ of $\psi_i$ and let $s$ be an integer with $e\equiv_k s$. The element $m_{i_0} e +c_{i_0}$ is divisible by $k$, so it must be an integer by Definition \ref{D:add_gen} (P3). Hence, we conclude that $e$ belongs to $\strp{A_i}$, as desired. 
\end{proof}

\section{Quantifier elimination and supersimplicity}\label{S:QE}
Given an elementary extension $\mathcal M$ of the $\LL$-structure $\mathcal Z$ and an additively random subset $D$ of $M$,  we will first show that the $\LL_P$-theory of the pair $(\mathcal M, D)$ has quantifier elimination up to basic $\LL_P^+$-formulae, that is, every $\LL_P$-formula $\varphi(x_1,\ldots, x_n)$ is a boolean combination of basic $\LL_P^+$-formulae in the same variables. 

\begin{prop}\label{P:BF_random}
Consider an expansion of the $\LL$-structure $\mathcal M$ by a new predicate $P$ which is interpreted as an additively random subset of $M$. The $\LL_P$-theory of $(\mathcal M,P^{\mathcal M})$ has quantifier elimination up to basic $\LL_P^+$-formulae.

In particular, the algebraic closure of a subset $A$ in the sense of $\LL_P$ coincides with the pure closure in $\mathcal M$ of the group $\sbgp{A,1}$. 
\end{prop}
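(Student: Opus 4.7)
The plan is to establish the quantifier elimination up to basic $\LL_P^+$-formulae by a standard back-and-forth argument between two $\aleph_0$-saturated models $(\mathcal M_1, D_1)$ and $(\mathcal M_2, D_2)$ of the $\LL_P$-theory. Given a partial $\LL_P^+$-isomorphism $F\colon A_1 \to A_2$ between pure substructures that preserves every basic $\LL_P^+$-formula (in both directions), and any $b \in \mathcal M_1$, one must find a compatible $b' \in \mathcal M_2$. By $\aleph_0$-saturation, it suffices to realize every finite fragment of the $F$-transported basic $\LL_P^+$-type of $b$. After absorbing the negated basic formulae into positive conjuncts (the negation of a congruence picks one of its non-zero disjuncts that $b$ satisfies, while the negation of a $P_k$-literal is itself a $P_k$-literal), each such fragment reduces to realizing a single basic $\LL_P^+$-formula $\varphi(x, F(\bar a))$ in $\mathcal M_2$.

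Apply Lemma \ref{L:Ziegler_KS} to obtain a primary $\LL_P$-formula $\psi(x, \bar a_1)$ over $A_1$, a non-constant $\LL$-term $t(x)$ and an element $e \in \mathcal M_1$ with $t(e) = b$, $e \models \psi$, and such that any realization of $\psi(x, F(\bar a_1))$ in $\mathcal M_2$ maps through $t$ to a realization of $\varphi(x, F(\bar a))$. Lemma \ref{L:primary_infty}, applied to $\psi$ (trivially compatible with itself), leaves two cases. If $\psi$ has infinitely many realizations in $\mathcal M_1$, then by Remark \ref{R:add_gen} the associated $\LL$-formula $\chi$ from Definition \ref{D:add_gen} holds at the parameters of $\psi$ in $\mathcal M_1$; since $F$ is in particular an $\LL$-isomorphism and $\mathcal Z$ has $\LL$-quantifier elimination, $\chi$ still holds at the $F$-transported parameters in $\mathcal M_2$, so Remark \ref{R:add_gen} produces the desired realization of $\psi(x, F(\bar a_1))$ in $\mathcal M_2$. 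Otherwise, all realizations of $\psi$ lie in $\strp{A_1} = A_1$, so $e \in A_1$ and $e' = F(e)$ works since $F$ preserves the primary, hence basic $\LL_P^+$, formula $\psi$.

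For the algebraic closure statement, the inclusion $\strp A \subseteq \acl_{\LL_P}(A)$ follows from the fact that the $\LL$-algebraic closure already coincides with $\strp A$. Conversely, after replacing $A$ by the pure substructure $\strp A$, assume $b \in \acl_{\LL_P}(\strp A)$ is witnessed by an $\LL_P$-formula $\varphi(x)$ over $\strp A$ with finitely many realizations. The quantifier elimination just established, together with the same absorption of negations, lets us assume $\varphi$ is itself a basic $\LL_P^+$-formula satisfied by $b$, still with finitely many realizations. Lemma \ref{L:Ziegler_KS} then produces a primary $\psi$ over $\strp A$ and a non-constant term $t$ with $t(e) = b$; since $t$ is finite-to-one and $\varphi$ has finitely many realizations, so does $\psi$, and Lemma \ref{L:primary_infty} places all realizations of $\psi$ into $\strp{\strp A} = \strp A$. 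Hence $e \in \strp A$ and $b = t(e) \in \strp A$, as desired. The genuine work is carried by Lemmas \ref{L:Ziegler_KS} and \ref{L:primary_infty}; the main technical point to handle with care is the absorption of negated basic formulae into positive conjuncts, in particular the choice of the correct disjunct of a negated congruence so as to remain compatible with the formula one is aiming to realize.
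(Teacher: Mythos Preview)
Your proof is correct and follows essentially the same route as the paper: a back-and-forth system of $\LL_P^+$-isomorphisms between pure substructures of saturated models, reduction via compactness to realizing a single basic $\LL_P^+$-formula, and then Lemma~\ref{L:Ziegler_KS} to pass to a primary formula whose realizability transfers through $F$ by the $\LL$-quantifier elimination of $\mathcal Z$ and condition~(P1). The algebraic-closure argument is likewise identical. Your discussion of ``absorbing negated basic formulae'' is slightly more elaborate than needed (the paper simply notes that $\Sigma$, the set of basic $\LL_P^+$-formulae in $\tp(b/A)$, is closed under conjunction), and you omit the one-line verification that the system is non-empty and that $F$ extends uniquely to pure closures, but these are minor presentational differences.
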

\begin{proof}
The last statement follows directly from the quantifier elimination result by Lemma \ref{L:Ziegler_KS}. Indeed: if the element $b$ were algebraic over the subset $A$ in the $\LL_P$-structure $(\mathcal M, D)$, where $D=P^{\mathcal M}$, then there is by quantifier elimination a basic $\LL_P^+$-formula $\varphi(x, \bar a)$ witnessing that $b$ is algebraic over $A$. Lemma \ref{L:Ziegler_KS} yields a term $t(x, \bar a_1)$, an element $e$ with $b=t(e, \bar a_1)$ and a primary $\LL_P$-formula $\psi(x, \bar a_1)$ in $\tp(e/\strp A)$ for some $\bar a_1$ in $\strp A$ such that $\psi(\UU, \bar a_1)$ is contained in $\varphi(t(\UU), \bar a)$. It suffices to show that $e$ (and thus $b$) belongs to $\strp A$, since the term is not constant. Now, the formula $\psi(x, \bar a_1)$ is consistent and thus in good position, yet it is algebraic, since $t$ is not constant. By Lemma \ref{L:primary_infty}, we deduce that  $e$ belongs to $\strp A$, as desired. 

In order to show quantifier elimination, a standard application of the Separation Lemma \cite[Lemma 3.1.1]{TZ12} yields that it suffices to show that working inside sufficiently saturated models $\UU$ and $\UU'$ of $\mathrm{Th_{\LL_P}((\mathcal M, D)}$), the collection of maps $F:A\to A'$, where $A\subset \UU$ and $A'\subset \UU'$ are countable $\LL_P^+$-substructures such that $F$ is an $\LL_P^+$-isomorphism, is a non-empty Back-\&-Forth system. It is clearly non-empty, since there is a unique $\LL_P^+$-structure with universe the group $\Z$, as we have added a constant to the language for the element $1$. 

Let us now prove that this system satisfies the Forth-condition, for the Back-condition is analogous. Consider two countable $\LL_P^+$-substructures $A$ and $A'$ as well as an $\LL_P^+$-isomorphism  $F:A\to A'$. We may assume that $A$ and $A'$ are both pure in $\UU$ and $\UU'$ respectively, for the map $F$ extends uniquely to an $\LL_P^+$-isomorphism between the pure closures, since an element $e$ of $\UU$ with $m e=a$ in $A$ belongs to $P_k$ if and only if $a$ belongs to $P_{km}$.

Given an element $b$ in $\UU$, consider the partial type $\Sigma(x)$ over $A$ of all basic $\LL_P^+$-formulae  in $\tp(b/A)$. We need only show that there exists an element $b'$ satisfying the image of $\Sigma$ under $F$. If so, then there is a unique extension of $F$ to an $\LL_P^+$-isomorphism  $\bar{F}: \str{A, b}_{\LL_P^+}\to \str{A', b'}_{\LL_P^+}$  which maps $b$ to $b'$.

Without loss of generality, we may assume that $b$ is not in $A$. 
By compactness (for $\Sigma$ is closed under finite conjunctions), we need only consider a single $\LL_P^+$-formula $\varphi(x, \bar a)$ in $\Sigma$. Now, by Lemma \ref{L:Ziegler_KS}, write $b=t(e, \bar a_1)$ with $\bar a_1$ in $A$ and find a primary $\LL_P$-formula 
\[ 
\psi(x, \bar a_1)= \bigwedge_{i=1}^r P(m_i x+ c_i) \land  \bigwedge_{j=1}^{r'}\neg  P(m'_j x+ c'_j)
\]  
in $\tp(e/A)$ such that $\psi(\UU, F(\bar a_1))$ is contained in $\varphi(t(\UU, F(\bar a_1), F(\bar a))$. Note that $e$ does not belong to $A$ either, so the consistent primary formula $\psi(x, \bar a_1)$ is in good position and must have infinitely many realizations in $\UU$, by Lemma \ref{L:primary_infty}. By Remark \ref{R:add_gen}, we have that $\LL$-formula $\chi(\bar z, \bar z')$ associated to $\psi(x, \bar y)$ holds for $(\bar c, \bar c')$, and hence $\chi(F(\bar c), F(\bar c'))$ also holds in $\UU'$, by quantifier elimination of the $\LL$-theory of $\mathcal Z$. We conclude from property (P1) that the formula $\psi(x, F(\bar a))$ has infinitely many realizations in $\UU'$. Choose thus by compactness a realization $e'$ not lying in $A'$. The element $b'=t(e', F(\bar a_1))$ realizes $\varphi(x, F(\bar a))$, as desired. 
\end{proof}

We now have all the ingredients to show that the $\LL_P$-theory $\mathrm{Th}_{\LL_P}(\mathcal M, D)$ is supersimple of rank $1$. We first recall Shelah's definitions of dividing \cite{sS90}, working inside a sufficiently saturated model $\mathbb U$ of $\mathrm{Th}_{\LL_P}(\mathcal M, D)$.

\begin{definition}\label{D:forking_dividing}

A formula $\varphi(x, \bar a)$ \emph{divides} over the (small) subset $B$ of $\UU$ if there exists a $B$-indiscernible sequence $(\bar a_n)_{n\in \N}$ with $\bar a_0=\bar a$ such that the set $\{\varphi(x, \bar a_n)\}_{n\in \N}$ is inconsistent.

 The theory $\mathrm{Th}_{\LL_P}(\mathcal M, D)$ is \emph{supersimple of rank $1$} if every formula $\varphi(x, \bar a)$, with $x$ a single variable, whenever $\varphi(x, \bar a)$ divides over $B$, then $\varphi(x, \bar a)$ is algebraic (that is, the set $\varphi(\UU, \bar a)$ is finite). 
\end{definition}

\begin{theorem}\label{T:SU1}
Consider the $\LL_P$-expansion of an elementary extension $\mathcal M$ of the $\LL$-structure $\mathcal Z$ by interpreting $P$ as an additively random subset of $M$. The $\LL_P$-theory of $(\mathcal M,P^{\mathcal M})$ is supersimple of rank $1$. 
\end{theorem}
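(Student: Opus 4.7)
The plan is to prove the contrapositive: a non-algebraic $\LL_P$-formula $\varphi(x, \bar a)$ in a single variable $x$ does not divide over any set $B$. By compactness, it suffices, for each $B$-indiscernible sequence $(\bar a_n)_{n\in\N}$ with $\bar a_0 = \bar a$ and each $N$, to show that $\bigwedge_{n<N}\varphi(x,\bar a_n)$ is consistent.

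By Proposition \ref{P:BF_random} I may replace $\varphi$ by a non-algebraic disjunct in its expression as a disjunction of basic $\LL_P^+$-formulae, and so assume $\varphi$ is itself a basic $\LL_P^+$-formula; each $\varphi(x, \bar a_n)$ is then non-algebraic by $B$-indiscernibility. Lemma \ref{L:Ziegler_KS} partitions the solution set of $\varphi(x, \bar a_0)$ by the finitely many possible ``primary types'' of a realization (the active subset of negative $P_k$-occurrences, together with a residue class modulo the l.c.m.\ of the indices appearing in $\varphi$); pigeonhole produces a primary type whose associated primary $\LL_P$-formula $\psi(x, \bar a'_0)$ over $\strp{B\bar a_0}$ is non-algebraic, with accompanying term $t(x) = Nn' x + c_0$ for some $c_0 \in \strp{B\bar a_0}$ and a positive integer $Nn'$ determined by $\varphi$ and the type. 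Choosing a witnessing $b_0$ of this type and extracting by Ramsey a $B$-indiscernible sequence $(\bar a_n, b_n)_n$ extending $(\bar a_0, b_0)$, the $B$-automorphisms $\sigma_n\colon (\bar a_0, b_0) \mapsto (\bar a_n, b_n)$ transport the reduction to primary formulae $\psi(x, \bar a'_n)$ of identical syntactic shape and terms $t_n(x) = Nn' x + c_n$ with $c_n := \sigma_n(c_0)$. Because congruence modulo a fixed integer is $\LL$-definable, $c_n \equiv c_0 \pmod{Nn'}$ for every $n$.

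The key construction is a common realization. Setting $d_n := (c_0 - c_n)/(Nn') \in \strp{B\bar a_0 \bar a_n}$ and $b^* := Nn' z + c_0$ for a new variable $z$, one checks $t_n(z + d_n) = b^*$, so by Lemma \ref{L:Ziegler_KS} one has $b^*\models\varphi(x, \bar a_n)$ whenever $z + d_n\models\psi(x, \bar a'_n)$ for every $n$. It therefore suffices to find a realization of the primary $\LL_P$-formula
\[
\Psi(z) \;:=\; \bigwedge_{n<N}\psi(z + d_n, \bar a'_n) \;=\; \bigwedge_{n<N}\bigwedge_i P\bigl(m_i z + (m_i d_n + c_i(\bar a'_n))\bigr)^{\epsilon_i}
\]
over $\strp{B\bar a_0 \cdots \bar a_{N-1}}$; each conjunct is non-algebraic, being a $B$-automorphic image of a translate of $\psi(x, \bar a'_0)$.

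The main technical obstacle is to verify that $\Psi$ is compatible in the sense of Lemma \ref{L:primary_infty}. Suppose a positive term of $\psi(z + d_n, \bar a'_n)$ coincides with a negative term of $\psi(z + d_m, \bar a'_m)$ for some $n < m$; expansion yields indices $i \neq j$ with $m_i = m_j =: m^*$, $\epsilon_i = 1, \epsilon_j = -1$, and the identity $c_i(\bar a'_n) - c_j(\bar a'_m) = m^*(c_n - c_m)/(Nn')$. By order-indiscernibility this identity holds for every pair $n' < m'$; instantiating at $(0, 1)$, $(0, 2)$ and $(1, 2)$ and subtracting pairs yields $c_j(\bar a'_1) - c_j(\bar a'_0) = m^*(c_1 - c_0)/(Nn')$, and substituting this back into the $(0, 1)$-instance forces $c_i(\bar a'_0) = c_j(\bar a'_0)$, contradicting the good position of $\psi(x, \bar a'_0)$. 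Hence $\Psi$ is compatible, and Lemma \ref{L:primary_infty} gives either infinitely many realizations of $\Psi$---yielding the common realization $b^* = Nn' z + c_0$ of $\bigwedge_n \varphi(x, \bar a_n)$---or else that all realizations of some conjunct $\psi(z + d_n, \bar a'_n)$ lie in $\strp{B \bar a_0 \bar a_n}$ and hence, by (P3) of Definition \ref{D:add_gen}, in a finite subset of $\Z$, contradicting non-algebraicity. In either case we obtain the common realization, completing the proof.
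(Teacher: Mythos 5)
Your proposal is correct in substance, but it is organized quite differently from the paper's proof: you argue the contrapositive and construct, for every $B$-indiscernible sequence, an explicit common realization of $\bigwedge_{n<N}\varphi(x,\bar a_n)$, whereas the paper argues directly that a dividing basic formula is algebraic, transferring the dividing witness to the primary formula produced by Lemma \ref{L:Ziegler_KS} and then combining the same indiscernibility trick for compatibility with Lemma \ref{L:primary_infty} applied to the finite inconsistent conjunction. The ingredients coincide (Lemma \ref{L:Ziegler_KS}, compatibility via indiscernibility, Lemma \ref{L:primary_infty}, quantifier elimination), but your route buys something: the change of variables $z\mapsto z+d_n$ with $d_n=(c_0-c_n)/(Nn')$ explicitly reconciles the fact that the terms $t_n(x)=Nn'x+c_n$ vary along the sequence, a point the paper treats rather quickly when it asserts that the sequence for $\varphi$ induces a dividing witness for $\psi$. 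The price is your pigeonhole step: the \emph{statement} of Lemma \ref{L:Ziegler_KS} does not say that finitely many ``shapes'' (the residues of the terms under negated $P_k$'s and the residue of the realization modulo a fixed modulus) determine $t$ and $\psi$ uniformly in the realization $b$; this is true, but only by inspecting the proof of the lemma, so you should state and verify this uniform version explicitly rather than cite the lemma as a black box.

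Two smaller patches are needed. First, at the end you deduce from Lemma \ref{L:primary_infty} that all realizations of some conjunct lie ``by (P3) of Definition \ref{D:add_gen} in a finite subset of $\Z$''; that is not what the lemma (or (P3)) gives: the conclusion is containment in the pure closure of the parameter set, which is infinite. The contradiction with non-algebraicity is nevertheless correct, but the justification should be that, by Proposition \ref{P:BF_random}, the pure closure is the algebraic closure, and a definable set contained in the algebraic closure of its small parameter set is finite in a saturated model (alternatively, unwind the proof of Lemma \ref{L:primary_infty}: each realization $e$ of the offending conjunct satisfies $m_{i_0}e+c_{i_0}\in\Z$ with the value ranging over a fixed finite set, so there are only finitely many such $e$). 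Second, the Ramsey extraction ``extending $(\bar a_0,b_0)$'' is imprecise as stated (one only gets $(\bar a_n,b_n)$ indiscernible with $b_0$ replaced by an element of the same type over $B\bar a_0$), but in fact the $b_n$'s are dispensable: since $\UU$ is torsion-free, $\strp{B\bar a_0}\subseteq\dcl(B\bar a_0)$ in the sense of $\LL$, so $c_n$ and $\bar a'_n$ are determined by any elementary map $\bar a_0\mapsto\bar a_n$ over $B$, and the cross-identities in your compatibility check are $\LL$-statements about $(\bar a_n,\bar a_m)$ over $B$, to which indiscernibility of the original sequence applies directly.
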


\begin{proof}
In order to show that every formula $\varphi(x, \bar a)$ which divides over the subset $B$ must be algebraic, we may assume by quantifier elimination that $\varphi(x, \bar a)$ is a consistent basic $\LL_P$-formula over $A=\strp {B, \bar a}$. We need only show that every realization $c$ of $\varphi(x, \bar a)$ belongs to $A$, by Proposition \ref{P:BF_random}.

By Lemma \ref{L:Ziegler_KS}, we find a non-constant term $t(x, \bar a_1)$ with $c=t(e, \bar a_1)$ for some $e$ in $\UU$ and $\bar a_1$ in $A$, as well as a primary $\LL_P$-formula $\psi(x, \bar a_1)$ in $\tp(e/A)$  such that  $\psi(\UU, \bar a_1)$ is contained in $\varphi(t(\UU), \bar a)$. Note that the $B$-indiscernible sequence for $\varphi(x, \bar a)$ induces a $B$-indiscernible sequence for $\psi(x, \bar a_1)$ witnessing that $\psi(x, \bar a_1)$ divides over $B$. It suffices to show that $\psi(x, \bar a_1)$ is algebraic, so $e$ (and thus $c$) belongs to $A$, as desired. Rewrite the formula as 
\[ 
\psi(x, \bar a, \bar a') = \bigwedge_{i=1}^r P(m_i x+ a_i) \land  \bigwedge_{j=1}^{r'} \neg P(m'_j x+ a'_j)  
\] 
and notice that it is in good position, since it is consistent. By assumption, the formula $\psi(x, \bar a, \bar a')$ divides over $B$, witnessed by the sequence $(\bar a_n, \bar a'_n)_{n\in \N}$. By compactness, there exists some $N\ge 2$ in $\N$ such that each of the primary $\LL_P$-formula $\psi(x, \bar a_k, \bar a'_k)$ with $k\le N$ is in good position (by indiscernibility), yet the conjunction $\bigwedge_{k\le N} \psi(x, \bar a_k, \bar a'_k)$ is inconsistent. It suffices to show that the formulae $(\psi(x, \bar a_k, \bar a'_k))_{k\le N}$ are compatible, by Lemma \ref{L:primary_infty}: indeed, all of the realizations of some $\psi(x, \bar a_i, \bar a'_i)$ must lie then in $\strp{B, \bar a_i, \bar a'_i}$, so  the formula $\psi(x, \bar a, \bar a')$ is algebraic over $A$ by indiscernibility, as desired.

Assume for a contradiction that the formulae $(\psi(x, \bar a_k, \bar a'_k))_{k\le N}$ are not compatible, and find, without loss of generality, two indices $1\le s<t\le N$ and some term which occurs positively in $\psi(x, \bar a_s, \bar a'_s)$ but negatively in $\psi(x, \bar a_t, \bar a'_t)$. After choosing suitable coordinates, write this term as $m_i x+ a_{s,i}=m'_j x+ a'_{t,j}$. 

By indiscernibility of the sequence, we have that 
\[ 
m'_j x+ a'_{t,j} = m_i x+ a_{s,i} = m'_j x+ a'_{t+1, j},
\] 
so 
\[  
m_i x+ a_{s,i}= m'_j x+ a'_{t,j}= m'_j x+ a'_{s,j},
\] contradicting that the formula $\psi(x, \bar a_i, \bar a'_i)$ was consistent, as desired.  
\end{proof}

We finish this note with the following question, for which we do not have a particular answer in mind.

\begin{question}
Consider the expansion $(\Z,0,+,-,\mathrm{Pr},\mathrm{SF})$ of the abelian group $\Z$, where $\mathrm{Pr}$ denotes all prime integers and $\mathrm{SF}$ denotes all square-free integers (see Example \ref{E:add-random_ex}). Does this expansion have a supersimple theory of rank $1$? 

More precisely, is there a sensible number-theoretical conjecture, (possibly) stronger than Dickson's conjecture, which implies that for any \emph{suitable} (\cf Definition \ref{D:add_gen}) collection of terms 
\[ m_1 x+ z_1,\ldots, m_r x+ z_r, m'_1 x + z'_1,\ldots, m'_{r'}x+z'_{r'}, \text{ and }m''_1 x + z''_1,\ldots, m''_{r''}x+z''_{r''},\]
whenever the associated terms are pairwise distinct, the formula 
 \[ 
    \bigwedge_{i=1}^r \mathrm{Pr}(m_i x+b_i) \land  \bigwedge_{j=1}^{r'}  (\mathrm{SF}(m'_j x+b'_j) \land \neg\mathrm{Pr}(m'_j x+b'_j)) \land    \bigwedge_{k=1}^{r''} \neg \mathrm{SF}(m''_k x+b''_k)
    \] 
has infinitely many realizations, unless a first-order condition expressible in the language $\LL$ of the abelian group $\Z$ with congruences holds?  If such a condition as above holds, we can just reproduce verbatim the proofs in this note to deduce the supersimplicity of the theory of $(\Z,\mathrm{Pr},\mathrm{SF})$. Actually, it would most likely suffice if there is an associated formula in the language $\LL\cup\{\mathrm{SF}\}$, by  considering the expansion of the supersimple theory of $(\Z,\mathrm{SF})$ by a predicate $\mathrm{Pr}$ for the prime integers. 
\end{question}


\begin{thebibliography}{99}

\bibitem{wB76} W. Baur, \emph{Elimination of Quantifiers for Modules}, Israel J. 
Math. {\bf 25} (1976), 64--70. 

    \bibitem{BT21} N. Bhardwaj and C-M. Tran, \emph{The additive groups of $\mathbb {Z}$ and $\mathbb {Q}$ with predicates for being square-free}, J. Symb. Log. {\bf 86}, (2021), 1324--1349. 

\bibitem{CP98} Z. Chatzidakis and A. Pillay, \emph{Generic structures and simple theories}, Ann. Pure Appl. Logic   95 (1998), 71--92.


\bibitem{KS17}
I. Kaplan and S. Shelah, \emph{Decidability and classification of the theory of integers with primes}, 
J. Symb. Log. {\bf 82} (2017), 1041--1050.



  \bibitem{sS90} S. Shelah, \emph{Classification Theory and the number of non-isomorphic models}, 
Studies in Logic and the Foundations of Mathematics {\bf 92}, (1990),  Amsterdam: North-Holland. xxxiv, 705 p.

 \bibitem{TZ12} K. Tent and M. Ziegler, \emph{A course in model theory}, Lecture Notes in Logic {\bf 40}, (2012),  Cambridge: Cambridge University Press, x, 248 p.


\end{thebibliography}
\end{document}